\theoremstyle{plain}
\newtheorem{theorem}{Theorem}
\newtheorem{lemma}[theorem]{Lemma}
\newtheorem{proposition}[theorem]{Proposition}
\theoremstyle{definition}
\theoremstyle{remark}
\newtheorem{remark}[theorem]{Remark}
\def\d#1{{#1\kern-0.4em\char"16\kern-0.1em}}
\def\D#1{{\raise0.2ex\hbox{-}\kern-0.4em #1}}
\newcounter{zd}
\newcounter{zdr}[subsection]
\def\cal{\mathcal}
\begin{document}
\title[]{ Adjoint operator of Bergman projection and Besov  space $ B_{1}$}

\author{David Kalaj}
\address{Faculty of natural sciences and mathematics, University of Montenegro, 
 D\v zor\v za Va\v singtona b.b. 81000, Podgorica, Montenegro}
\email{davidkalaj@gmail.com}

 \author{ DJORDJIJE VUJADINOVI\'C }
\address{Faculty of natural sciences and mathematics, University of Montenegro, D\v zor\v za Va\v singtona b.b. 81000 Podgorica, Montenegro}
 \email{  djordjijevuj@t-com.me}
 \date{}

\begin{abstract}
The main result of this paper is related to finding two-sided bounds of norm for the adjoint operator $P^{\ast}$ of the Bergman projection $P,$ where $P$ denotes the Bergman projection wich maps $L^{1}(D,d\lambda(z))$ onto the  Besov space $B_{1}.$  Here $d\lambda(z)$ is the M\"obius invariant measure ${(1-|z|^2)^{-2}}{dA(z)}$.  It is shown that $2\leq\| P^{\ast}\|\leq 4.$

\end{abstract}

\keywords{ Bergman projection, Besov space}

\maketitle

\section{Introduction}
 The Bergman projections are some of the most important operators acting on the spaces of analytic functions over domains of the complex plane. Finding   necessary and sufficient conditions for which Bergman projection is bounded  is one of main subject of this area of research. \\
 \indent Throughout the  paper  $D$ is the open  unit disc in complex plane ${\bf C}$ and $dA(z)=\frac{1}{\pi}dxdy$ is the normalized Lebesgue area measure. $H(D)$ is as usually the space of all analytic functions on the unit disc.\\
\indent  Now we recall some basic facts from the theory of Bergman spaces. The weighted  Bergman space $A^{p}_{\alpha}, 0< p<\infty$ and $-1<\alpha<+\infty$ is the set of analytic functions on $D$ which belong to the Lebesgue space $L^{p}(D, dA_{\alpha}),$ where $dA_{\alpha}=(\alpha+1)(1-|z|^2)^{\alpha}dA(z).$  In the case when $\alpha =0$ we have ordinary Bergman space denoted by $A^{p}.$ The weighted Bergman space $A^{p}_{\alpha}$ is a closed  Banach subspace of $L^{p}(D, dA_{\alpha}).$  The fact that $A^{2}_{\alpha}$ is a Hilbert subspace of $L^{2}(D,dA_{\alpha})$  gives natural way to define Bergman projection $P_{\alpha}$ as an orthogonal projection from $L^{2}(D,dA_{\alpha})$ onto $A^{2}_{\alpha}$  and it is defined as
 $$P_{\alpha}f(z)=\int_{D}f(w){\cal K_{\alpha}(z,w)}dA_{\alpha}(w),z\in D,$$
 where ${\cal K_{\alpha}}$ is the Bergman reproducing kernel ${\cal K_{\alpha}(z,w)=\frac{1}{(1-z\bar{w})^{2+\alpha}}},z,w\in D.$ Since the previous formula is pointwise and $A^{2}_{\alpha}$ is dense in $A^{1}_{\alpha},$ we have that for any $f\in A^{1}_{\alpha}$
 $$f(z)=\int_{D}\frac{f(w)}{(1-z\bar{w})^{2+\alpha}}dA_{\alpha},\enspace z\in D.$$
\indent   In \cite[Theorem~1.10]{Zhu1} there were given sufficient and necessary conditions for $P_{\beta}$ to be bounded, i.e.
\begin{proposition}
Suppose $-1<\alpha,\beta<+\infty$ and $1\leq p<+\infty.$ Then $P_{\beta }$ is a bounded projection from $L^{p}(D,dA_{\alpha})$ onto $A^{p}_{\alpha}$ if and only if $\alpha+1<(\beta+1)p.$
\end{proposition}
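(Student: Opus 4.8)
The plan is to separate the statement into the boundedness of $P_\beta$ as an operator on $L^p(D,dA_\alpha)$ and the (easier) fact that it then acts as a projection onto $A^p_\alpha$. For the latter, note that the kernel $(1-z\bar w)^{-(2+\beta)}$ is analytic in $z$, so differentiating under the integral sign shows $P_\beta$ maps into $H(D)$; combined with the reproducing identity (the $\beta$-analogue of the displayed formula for $A^1_\alpha$), $P_\beta$ fixes every $f\in A^p_\alpha$, hence is idempotent with range exactly $A^p_\alpha$. Thus everything reduces to deciding when $P_\beta\colon L^p(D,dA_\alpha)\to L^p(D,dA_\alpha)$ is bounded. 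I would further replace $P_\beta$ by the positive operator $P_\beta^{+}$ with kernel $(1-|w|^2)^{\beta-\alpha}|1-z\bar w|^{-(2+\beta)}$ against $dA_\alpha$, since $|P_\beta f|\le P_\beta^{+}|f|$ and boundedness of $P_\beta^{+}$ implies that of $P_\beta$.

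For sufficiency I would run Schur's test on $L^p(dA_\alpha)$ with the power weights $h(z)=(1-|z|^2)^{-s}$. The whole computation rests on the Forelli--Rudin estimate: for $c>-1$ the integral $\int_D (1-|w|^2)^{c}|1-z\bar w|^{-(2+c+t)}\,dA(w)$ is comparable to $1$, to $\log\frac{1}{1-|z|^2}$, or to $(1-|z|^2)^{-t}$ according as $t<0$, $t=0$, or $t>0$. Feeding $h$ into the two Schur inequalities and evaluating each integral with this estimate turns the test into a pair of algebraic constraints on $s$, essentially $sp<\alpha+1$ together with $sp'<\beta+1$ (where $1/p+1/p'=1$) plus a compatibility condition forcing the exponents produced by the estimate to be positive. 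An elementary manipulation then shows that an admissible $s$ exists precisely when $\alpha+1<(\beta+1)p$, which gives boundedness of $P_\beta^{+}$ in that range. The endpoint $p=1$ falls outside Schur's test and I would treat it directly by Tonelli, reducing boundedness to $\sup_{w}\int_D(1-|w|^2)^{\beta-\alpha}|1-z\bar w|^{-(2+\beta)}\,dA_\alpha(z)<\infty$, which by the same estimate holds iff $\beta>\alpha$, i.e. iff $\alpha+1<(\beta+1)\cdot 1$.

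For necessity the naive idea of applying $P_\beta$ to radial functions fails, since $P_\beta(1-|\cdot|^2)^{c}$ is merely a constant and reveals nothing; this is the main obstacle, and the way around it is to test against the constant function rather than to inspect the pointwise image. Assuming $P_\beta$ bounded, H\"older gives $|\langle P_\beta f,\mathbf{1}\rangle_\alpha|\le C\|f\|_{p,\alpha}$ for all $f$ (here $\langle\cdot,\cdot\rangle_\alpha$ is the $dA_\alpha$-pairing and $\|\mathbf 1\|_{p',\alpha}=1$ since $dA_\alpha$ has total mass $1$). On a dense class of $f$ I would compute this pairing by Fubini: since $\int_D(1-z\bar w)^{-(2+\beta)}\,dA_\alpha(z)=1$ by the vanishing of all nonconstant Taylor terms against the radial weight, one obtains $\langle P_\beta f,\mathbf{1}\rangle_\alpha=\frac{\beta+1}{\alpha+1}\int_D f(w)(1-|w|^2)^{\beta-\alpha}\,dA_\alpha(w)$. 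Hence the functional $f\mapsto\int_D f\,(1-|\cdot|^2)^{\beta-\alpha}\,dA_\alpha$ is bounded on $L^p(dA_\alpha)$, and by duality this forces $(1-|\cdot|^2)^{\beta-\alpha}\in L^{p'}(dA_\alpha)$, i.e. $\int_D(1-|w|^2)^{(\beta-\alpha)p'+\alpha}\,dA(w)<\infty$. The integrability condition $(\beta-\alpha)p'+\alpha>-1$ rearranges, after clearing the factor $p-1>0$, into exactly $\alpha+1<(\beta+1)p$, completing the equivalence.
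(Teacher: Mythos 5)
This proposition is never proved in the paper: it is quoted verbatim from \cite[Theorem~1.10]{Zhu1}, so there is no in-paper argument to compare against and the only question is whether your proof stands on its own. Your architecture is the standard one (reduce the projection property to boundedness via the reproducing formula, majorize by the positive operator, Schur's test with weights $(1-|z|^2)^{-s}$ and the Forelli--Rudin estimate for $1<p<\infty$, Tonelli for $p=1$, duality for necessity), and for $1<p<\infty$ the details you give are correct. One point you gloss over: to apply the $\beta$-reproducing formula to $f\in A^p_\alpha$ you must first check $A^p_\alpha\subset A^1_\beta$ (otherwise the reproducing integral need not converge); this inclusion follows by H\"older precisely under the hypothesis $\alpha+1<(\beta+1)p$, so it should be stated.

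The genuine gap is the necessity direction at $p=1$. Your duality step yields $(1-|\cdot|^2)^{\beta-\alpha}\in L^{p'}(dA_\alpha)$, and for $p=1$ (so $p'=\infty$) this says only $\beta\ge\alpha$, whereas the theorem requires the strict inequality $\beta>\alpha$; your own phrase ``after clearing the factor $p-1>0$'' shows the rearrangement is valid only for $p>1$. The missing case $p=1$, $\beta=\alpha$ is exactly the classical assertion that $P_\alpha$ is \emph{not} bounded on $L^1(D,dA_\alpha)$, and it genuinely cannot be reached by testing against $\mathbf 1$: your Fubini computation gives $\left|\int_D P_\alpha f\,dA_\alpha\right|=\left|\int_D f\,dA_\alpha\right|\le\|f\|_{L^1(dA_\alpha)}$, a perfectly bounded functional, so the method is blind to this endpoint. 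You need a separate argument there, for instance normalized test functions concentrated near the boundary: taking $f_a=\chi_{D(a,\delta)}\big/\int_{D(a,\delta)}dA_\alpha$ with $D(a,\delta)$ a Bergman disk, one has $\|f_a\|_{L^1(dA_\alpha)}=1$ while $P_\alpha f_a(z)\approx (1-z\bar a)^{-(2+\alpha)}$, and the logarithmic case $t=0$ of the very Forelli--Rudin estimate you quote gives $\|P_\alpha f_a\|_{L^1(dA_\alpha)}\gtrsim\log\frac{1}{1-|a|^2}\to\infty$. Alternatively, boundedness of $P_\alpha$ on $L^1(dA_\alpha)$ would force its adjoint with respect to the $dA_\alpha$-pairing --- which is $P_\alpha$ itself --- to be bounded on $L^\infty$, contradicting the fact that $P_\alpha L^\infty$ is the Bloch space, which contains unbounded functions. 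With either patch, your proof is complete; without it, the equivalence is only established for $1<p<\infty$.
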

 The  Besov space $B_{p}\enspace\mbox{of}\enspace D,\enspace1<p<+\infty,$ is the space of analytic functions $f$ in $D$ such that
  $$\| f\|_{B_{p}}=\left (\int_{D}(1-|z|^2)^{p}|f'(z)|^{p}d\lambda(z)\right)^{\frac{1}{p}}<+\infty, $$
  where $d\lambda(z)=\frac{dA(z)}{(1-|z|^2)^2}$ is a M\"{o}bius invariant measure on $D.$  By the previous relation we define $\| \cdot \|_{B_{p}}$ which is a complete semi-norm on $B_{p}.$
 The  Besov space $B_{p}$ is a Banach space with the norm $$\| f\|=|f(0)|+\| f\|_{B_{p}}.$$
  When $p=\infty,\enspace B_{\infty}={\cal {B}}$  is  the Bloch space, and it is defined as
  $${\cal B}=\{f\in H(D):\sup_{z\in D}{\{(1-|z|^2)|f'(z)|<+\infty\}}.$$
 The  little Bloch space ${\cal B_{0}}$ is defined as
$${\cal B_{0}}=\{f\in H(D):\lim_{|z|\rightarrow 1^{-}}(1-|z|^2)|f'(z)|=0\}.$$
The Little Bloch space ${\cal B_{0}}$ is a closed subspace of  ${\cal B},$ i.e. more precisely ${\cal B_{0}}$ is the closure  in ${\cal B}$ of the polynomials.\\
\indent The Besov space $B_{1}$ is defined in the different manner. $B_{1}$ is the space of analytic functions $f \enspace\mbox{in}\enspace D,$ which can be presented as $$f(z)=\sum_{n=1}^{\infty}a_{n}\varphi_{\lambda_{n}}(z)\enspace\mbox{for some sequence}\enspace (a_{n})\in l^{1}\enspace\mbox{and}\enspace (\lambda_{n})\in D.$$
  Here $\varphi_{\lambda_{n}}$ denotes M\"{o}bius  transform of $D,\enspace \mbox{i.e.}\enspace \varphi_{\lambda_{n}}(z)=\frac{z-\lambda_{n}}{1-z\overline{\lambda_{n}}}, z\in D.$ $B_{1}$ is a Banach space with the norm

$$\| f\|_{B_{1}}=\inf{\left\{\sum_{n=1}^{\infty}|a_{n}|:f(z)=\sum_{n=1}^{\infty}a_{n}\varphi_{\lambda_{n}(z)}\right\}}.$$
 The representation of the dual spaces of the Besov spaces are important in proving our results. So, let us recall some facts about dualities of Besov spaces \cite[Theorem~5.3.7]{Zhu}.
\begin{proposition}
Under the invariant paring
$$\left<f,g\right>=\int_{D}f'(z)\overline{ g'(z)} dA(z)$$
we have following dualities:\\

\vspace*{5mm}
(1) $B_{p}^{\ast} \cong B_{q}$ if $1\leq p<+\infty\enspace\mbox {and}\enspace \frac{1}{p}+\frac{1}{q}=1;$\\

\vspace*{3mm}
(2) $ {\cal B_{0}}^{\ast}\cong B_{1}.$
\end{proposition}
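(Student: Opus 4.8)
Both statements are standard functional-analytic dualities, and the plan is to prove them by realizing each Besov space isometrically inside a weighted Lebesgue space via differentiation, and then combining H\"older's inequality with a Bergman-type projection.

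First I would dispose of the easy inclusion $B_q\hookrightarrow B_p^{\ast}$. Writing the seminorm as $\|f\|_{B_p}^p=\int_D|f'(z)|^p(1-|z|^2)^{p-2}\,dA(z)$ and splitting the integrand of the pairing as
$$|f'\,g'|=\bigl[\,|f'|(1-|z|^2)^{\frac{p-2}{p}}\,\bigr]\bigl[\,|g'|(1-|z|^2)^{-\frac{p-2}{p}}\,\bigr],$$
H\"older's inequality with exponents $p,q$ gives $|\langle f,g\rangle|\le\|f\|_{B_p}\|g\|_{B_q}$, once one checks that $-\tfrac{(p-2)q}{p}=q-2$, which is exactly the identity forced by $\tfrac1p+\tfrac1q=1$. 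Hence every $g\in B_q$ defines a bounded functional $f\mapsto\langle f,g\rangle$ of norm at most $\|g\|_{B_q}$, and the same computation handles the endpoint $p=1,\ q=\infty$.

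For the reverse direction, when $1<p<\infty$, I would exploit the isometry $\Lambda\colon f\mapsto f'$, which maps $B_p$ (modulo constant functions) onto a closed subspace of $L^p(d\nu)$, where $d\nu=(1-|z|^2)^{p-2}dA$. Given $\ell\in B_p^{\ast}$, transport it to this subspace, extend it by Hahn--Banach to all of $L^p(d\nu)$, and represent it through the $L^q(d\nu)$ duality: there is $\Phi\in L^q(d\nu)$ with $\ell(f)=\int_D f'\,\overline{\psi}\,dA$, where $\psi=\Phi\,(1-|z|^2)^{p-2}$. Since $f'$ ranges over a dense set of analytic functions, I would replace $\psi$ by an appropriate weighted Bergman projection $G:=P_\beta\psi$, using self-adjointness of $P_\beta$ on a dense $L^2$ class, so that $\ell(f)=\int_D f'\,\overline{G}\,dA$ with $G$ analytic. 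Setting $g'=G$ and $g(0)=0$ produces the candidate, and it remains to verify that $g\in B_q$ with $\|g\|_{B_q}\lesssim\|\ell\|$.

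The crux, and the step I expect to be the main obstacle, is precisely this last verification: it amounts to showing that the weighted Bergman projection $P_\beta$ is bounded from the space in which $\psi$ lives into the weighted space $L^q\bigl((1-|z|^2)^{q-2}dA\bigr)$ that computes $\|g\|_{B_q}$. This is exactly the kind of mapping property supplied by the boundedness criterion for weighted Bergman projections recalled above, so the whole argument reduces to choosing $\beta$ and checking that the relevant exponents satisfy the inequality $\alpha+1<(\beta+1)p$ there. For the endpoint $p=1$ (so $q=\infty$) the same scheme applies with $L^{\infty}(d\nu)$ in place of $L^q(d\nu)$, and one invokes the classical fact that the Bergman projection carries $L^{\infty}$ onto the Bloch space $\mathcal B=B_\infty$, yielding $B_1^{\ast}\cong\mathcal B$. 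Part (2) is the predual statement: since $\mathcal B_0$ is the closure of the polynomials in $\mathcal B$, the map $f\mapsto(1-|z|^2)f'$ embeds it into $C_0(D)$, whose dual is a space of measures; projecting those measures by a Bergman-type operator lands in $B_1$, and the density of polynomials is what forces surjectivity and the isometric identification $\mathcal B_0^{\ast}\cong B_1$. The genuinely delicate points are these non-reflexive endpoints, where one must argue through the $L^{\infty}$/measure dualities and the closure description of $\mathcal B_0$ rather than a direct reflexive duality.
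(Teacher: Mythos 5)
The paper does not prove this proposition at all: it is quoted directly from Zhu's book (Theorem~5.3.7 of \cite{Zhu}), so your sketch can only be judged on its own merits. For $1<p<\infty$ your plan is the standard textbook argument and is sound in outline: the H\"older step with the exponent identity $-\frac{(p-2)q}{p}=q-2$ is correct, and the converse via the embedding $f\mapsto f'$ into $L^p\bigl((1-|z|^2)^{p-2}dA\bigr)$, Hahn--Banach, $L^q$ representation, and a weighted Bergman projection whose boundedness is supplied by the criterion $\alpha+1<(\beta+1)p$ of Proposition~1, is exactly how the reflexive case is handled.

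The genuine gap is at the endpoints --- which are the only cases this paper actually uses ($B_1$, $\mathcal{B}$, $\mathcal{B}_0$). Your assertion that ``the same computation handles the endpoint $p=1$, $q=\infty$'' is false: for every nonconstant analytic $f$ one has $\int_D|f'(z)|(1-|z|^2)^{-1}dA(z)=+\infty$, because the circle means of the subharmonic function $|f'|$ are nondecreasing in the radius, so the radial integral against $(1-r)^{-1}$ diverges. This is precisely why the paper (following Zhu) defines $B_1$ by the atomic decomposition $f=\sum_n a_n\varphi_{\lambda_n}$ with $\sum_n|a_n|<\infty$, and not by the integral seminorm. For the same reason the embedding $f\mapsto f'$ of $B_1$ into $L^1\bigl((1-|z|^2)^{-1}dA\bigr)$, on which your Hahn--Banach scheme for $B_1^{\ast}\cong\mathcal{B}$ rests, does not exist, and even the absolute convergence of the pairing $\int_D f'\overline{g'}\,dA$ for $f\in B_1$, $g\in\mathcal{B}$ fails in general; the endpoint dualities must instead be run through the characterization $f\in B_1\Leftrightarrow f''\in L^1(D,dA)$ (or directly through the atoms), with the pairing interpreted in a suitably regularized sense. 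Your outline of (2) --- embed $\mathcal{B}_0$ into $C_0(D)$ via $f\mapsto(1-|z|^2)f'(z)$, apply Riesz representation, project the resulting measure --- is the right strategy, but verifying that the projected measure yields an element of $B_1$ again requires the $f''\in L^1$ (or atomic) description of $B_1$, and nowhere do you reconcile your construction with the paper's atomic definition, which is the one actually in force here. Finally, these dualities are isomorphisms with equivalent norms, not the isometric identifications you claim.
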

In this paper by boundedness of Bergman projection on $L^{1}(D,d\lambda)$, we mean that there exists a constant $C>0$ such that
   $$\| Pf\|_{B_{1}}\leq C\| f\|_{L^{1}(D,d\lambda)}.$$
Bergman projection connects the Besov  $B_{p}$ space and $L^{p}(D,d\lambda).$ This relation is expressed in next theorem.
 \begin{proposition}
 Suppose $f\in H(D),1\leq p\leq+\infty.$ Then
 $$f\in B_{p}\Leftrightarrow f\in PL^{p}(D,d\lambda),\enspace\mbox{where}\enspace P\enspace\mbox{is the Bergman projection.}$$
 \end{proposition}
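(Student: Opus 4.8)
The plan is to establish the two inclusions $PL^{p}(D,d\lambda)\subseteq B_{p}$ and $B_{p}\subseteq PL^{p}(D,d\lambda)$ separately, for $1\le p\le\infty$, the common engine being the mapping behaviour of the integral operator defined by the Bergman kernel. One caveat must be kept in mind from the outset: for $1<p\le\infty$ the space $B_{p}$ is described by the derivative seminorm $\int_{D}(1-|z|^2)^{p}|f'(z)|^{p}\,d\lambda(z)$, whereas for $p=1$ this quantity is already infinite on nonconstant polynomials, so the endpoint $p=1$ must be handled through the atomic (M\"obius) definition of $B_{1}$ recalled above. I would therefore treat $1<p\le\infty$ by the seminorm and isolate $p=1$ at the end.

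For the inclusion $PL^{p}(D,d\lambda)\subseteq B_{p}$ with $1<p\le\infty$, write $Pf(z)=\int_{D}(1-z\bar w)^{-2}f(w)\,dA(w)$ and differentiate under the integral sign, so that $(Pf)'(z)=\int_{D}2\bar w(1-z\bar w)^{-3}f(w)\,dA(w)$. Since $dA=(1-|w|^2)^2\,d\lambda$, this gives $(1-|z|^2)(Pf)'(z)=\int_{D}\tilde K(z,w)f(w)\,d\lambda(w)$ with $\tilde K(z,w)=2\bar w(1-|z|^2)(1-|w|^2)^{2}(1-z\bar w)^{-3}$, a kernel of the standard Forelli--Rudin type. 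I would bound this operator on $L^{p}(D,d\lambda)$ by Schur's test, choosing a test function $(1-|w|^2)^{-s}$ with $s$ calibrated to $p$; the two Schur inequalities then reduce to the Forelli--Rudin asymptotics $\int_{D}(1-|w|^2)^{c}|1-z\bar w|^{-(2+c+r)}\,dA(w)\asymp(1-|z|^2)^{-r}$ for $r>0$. This yields $\|Pf\|_{B_{p}}\lesssim\|f\|_{L^{p}(D,d\lambda)}$, and at $p=\infty$ the same kernel estimate recovers the classical fact that $P$ carries $L^{\infty}$ into the Bloch space $\mathcal B=B_{\infty}$.

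For the converse $B_{p}\subseteq PL^{p}(D,d\lambda)$ with $1<p\le\infty$, I would manufacture an explicit preimage from $(1-|z|^2)f'(z)$, whose $L^{p}(D,d\lambda)$-norm is precisely $\|f\|_{B_{p}}$ and hence finite. The remaining task is to verify a reproducing identity of the form $Pg=f$ for the resulting $g$. I would prove this identity first on the monomials $z^{n}$, where both sides collapse to elementary beta-integrals $\int_{D}|w|^{2n}(1-|w|^2)^{t}\,dA(w)$ that fix the normalizing constant, and then extend it to all of $B_{p}$ by density of polynomials together with the continuity of $P$ established in the previous step. The endpoint $p=1$ is then handled through the atomic description of $B_{1}$: it suffices to check that each generator $\varphi_{\lambda}$ equals $Pg_{\lambda}$ for some $g_{\lambda}\in L^{1}(D,d\lambda)$ of uniformly controlled norm, so that an $\ell^{1}$-combination of atoms is the image of an $L^{1}(d\lambda)$ function (giving $B_{1}\subseteq PL^{1}(D,d\lambda)$), and conversely that $P$ sends $L^{1}(D,d\lambda)$ into $B_{1}$ because the kernel integrated against a localized mass near $w=\lambda$ reproduces, up to a controlled error, a M\"obius atom.

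The main obstacle throughout is the weight bookkeeping: getting the exponents in $\tilde K$ and in the reproducing formula exactly right, so that the differentiated kernel lands inside the admissible Forelli--Rudin range and the reproducing constant comes out to one. The $L^{p}(D,d\lambda)$-boundedness of the Forelli--Rudin operator is the technical heart; once it is in hand, the forward bound, the density argument in the converse, and the uniform control of the atoms at $p=1$ all follow, and the two inclusions combine to the stated equivalence for every $1\le p\le\infty$.
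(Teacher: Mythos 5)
The first thing to note is that the paper does not actually prove this proposition: it is stated and then attributed to Zhu's book \cite{Zhu}, so your attempt can only be compared with the standard argument given there. For $1<p<\infty$ your outline does follow that standard argument: differentiation under the integral sign and a Schur test built on the Forelli--Rudin estimates for the inclusion $PL^{p}(D,d\lambda)\subseteq B_{p}$, and an explicit preimage with a reproducing identity checked on monomials and extended by density for the converse; this part is essentially right. One repair is needed at $p=\infty$: polynomials are dense in the little Bloch space $\mathcal{B}_{0}$ but not in $\mathcal{B}=B_{\infty}$, so the density step fails there; it can be replaced by verifying the identity $Pg=f-f(0)$ directly through the orthogonality computation, which needs no density.

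The genuine gap is at $p=1$, precisely where your sketch is vaguest. Any preimage manufactured from $(1-|z|^2)f'(z)$ has $L^{1}(D,d\lambda)$-norm $\int_{D}(1-|w|^2)|f'(w)|\,d\lambda(w)=\int_{D}|f'(w)|(1-|w|^2)^{-1}dA(w)$, and this is already infinite for $f(z)=z=\varphi_{0}(z)$; so no M\"obius atom $\varphi_{\lambda}$ admits a preimage of ``uniformly controlled norm'' by the first-derivative recipe, and the phrase ``for some $g_{\lambda}\in L^{1}(D,d\lambda)$ of uniformly controlled norm'' hides the key construction you never supply: one must pass to second derivatives, taking roughly $g_{\lambda}(w)\sim(1-|w|^2)^{2}\varphi_{\lambda}''(w)$ times a unimodular factor plus a polynomial correction, for which Forelli--Rudin gives $\|g_{\lambda}\|_{L^{1}(d\lambda)}\asymp\int_{D}|\varphi_{\lambda}''|\,dA\lesssim 1$ uniformly in $\lambda$. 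Similarly, your argument for $PL^{1}(D,d\lambda)\subseteq B_{1}$ --- that the kernel against a localized mass reproduces an atom ``up to a controlled error'' --- is a heuristic, not a proof. The standard argument differentiates twice under the integral, applies Fubini and the estimate $\int_{D}|1-z\bar w|^{-4}dA(z)\asymp(1-|w|^2)^{-2}$ to obtain $\int_{D}|(Pg)''|\,dA\lesssim\|g\|_{L^{1}(d\lambda)}$, and then invokes the characterization that an analytic $f$ belongs to $B_{1}$ if and only if $f''\in L^{1}(D,dA)$; that characterization is equivalent to the atomic decomposition theorem for $B_{1}$ and is the real content of the endpoint case. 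Without proving or citing it, your $p=1$ argument does not close, so the proposal as written establishes the proposition only for $1<p<\infty$.
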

   The proof  of previous theorem is given in \cite{Zhu},  where it is shown that the Bergman projection maps boundedly $L^{p}(D,d\lambda)\enspace\mbox{onto}\enspace B_{p},\enspace 1<p<+\infty.$
In  \cite{Perala} Per\"al\"a has found the exact norm of the following projection $P:L^{\infty}(D,dA)\rightarrow {\cal B}$, i.e. he found that $\| P\|=\frac{8}{\pi}.$ For a generalization to several dimensional case see the work of Kalaj and Markovi\'c \cite{km}.
 The extended Bergman projections from Lebesgue classes onto all Besov spaces on the unit ball in $ C^n$ are defined and investigated in work of H.Turgay Kaptanoglu \cite{Kap}.

In this paper we consider the adjoint Bergman projection for the case $p=1,$ i.e. $P^{\ast}:(B_{1})^{\ast}\rightarrow (L^{1}(D,d\lambda))^{\ast}.$ By Proposition~1 the Bergman projection maps $L^{1}(D,d\lambda)$ onto $B_{1}.$ It is easy to show that Closed graph theorem implies that $P$ is bounded in this case  and therefore $P^{\ast}$ is also bounded. Our main results (Theorem~\ref{main}) implies that its norm $\|P^{\ast}\|$ is bounded by $4$. It remains an open problem to find its exact value.
\section{The result}

Assume that $-2<\alpha\leq -1$ and define $d\lambda_{\alpha}=(1-|z|^2)^{\alpha}dA(z).$ As $d\lambda_{\alpha}\le d\lambda$, it follows that $L^1(D,d\lambda)\subset L^{1}(D,d\lambda_{\alpha})$.
In the sequel we consider the following sub-space of normed space $L^{1}(D,d\lambda_{\alpha})$
$$L_{\alpha}^{1}(D,d\lambda)=\left\{f\in L^{1}(D,d\lambda_{\alpha}):\int_{D}|f(z)|d\lambda(z)<\infty\right\}.$$
 In our first result we prove that $ P:L_{\alpha}^{1}(D,d\lambda)\rightarrow B_{1}$ is unbounded mapping.

We need  the following simple result related to the dual of  $L_{\alpha}^{1}(D,d\lambda).$
\begin{lemma}
The dual space  $(L_{\alpha}^{1}(D,d\lambda))^{\ast}$ is isometrically isomorphic to the space $L^{\infty}(D,d\lambda_{\alpha}).$
\end{lemma}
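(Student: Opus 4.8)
The plan is to exhibit $L_{\alpha}^{1}(D,d\lambda)$ as a \emph{dense} subspace of $L^{1}(D,d\lambda_{\alpha})$ carrying exactly the ambient norm, and then to transport the classical duality $(L^{1}(D,d\lambda_{\alpha}))^{\ast}\cong L^{\infty}(D,d\lambda_{\alpha})$ (valid since $d\lambda_{\alpha}$ is a $\sigma$-finite Borel measure on $D$) down to it. The mechanism is the elementary principle that a bounded linear functional on a dense subspace of a normed space extends uniquely to the whole space with the same norm, so that restriction induces an isometric isomorphism of the dual spaces. First I would record that, by the inclusion $L^{1}(D,d\lambda)\subset L^{1}(D,d\lambda_{\alpha})$ already noted above, the set $L_{\alpha}^{1}(D,d\lambda)$ coincides with $\{f:\int_{D}|f|\,d\lambda<\infty\}$, viewed inside $L^{1}(D,d\lambda_{\alpha})$ and equipped with the norm $\|f\|_{L^{1}(D,d\lambda_{\alpha})}=\int_{D}|f|\,d\lambda_{\alpha}$.

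The crux is the density statement. Given $g\in L^{1}(D,d\lambda_{\alpha})$, I would truncate simultaneously in modulus and away from the boundary, setting $g_{n}=g\,\chi_{\{|z|\le 1-1/n\}}\,\chi_{\{|g|\le n\}}$. Each $g_{n}$ is bounded with support in a compact subset of $D$, on which $(1-|z|^{2})^{-2}$ is bounded, so $\int_{D}|g_{n}|\,d\lambda<\infty$ and hence $g_{n}\in L_{\alpha}^{1}(D,d\lambda)$. Since $|g_{n}|\le|g|$ and $g_{n}\to g$ pointwise almost everywhere, dominated convergence in $L^{1}(D,d\lambda_{\alpha})$ yields $\|g_{n}-g\|_{L^{1}(D,d\lambda_{\alpha})}\to 0$, which proves that $L_{\alpha}^{1}(D,d\lambda)$ is dense in $L^{1}(D,d\lambda_{\alpha})$.

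With density in hand the conclusion is immediate: the standard isometry $\phi\mapsto\bigl(f\mapsto\int_{D}f\phi\,d\lambda_{\alpha}\bigr)$ identifies $L^{\infty}(D,d\lambda_{\alpha})$ with $(L^{1}(D,d\lambda_{\alpha}))^{\ast}$, and restriction of functionals to the dense subspace $L_{\alpha}^{1}(D,d\lambda)$ is norm-preserving, hence an isometric isomorphism onto $(L_{\alpha}^{1}(D,d\lambda))^{\ast}$. The only point demanding care is precisely that the identification be \emph{isometric} rather than merely a Banach-space isomorphism; this is exactly what density guarantees, since the supremum defining a functional's norm over the unit ball of the dense subspace equals the supremum over the unit ball of the full space. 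I do not expect any genuine obstacle beyond verifying this density carefully.
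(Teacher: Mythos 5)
Your proposal is correct and follows essentially the same route as the paper: both identify $(L_{\alpha}^{1}(D,d\lambda))^{\ast}$ with $(L^{1}(D,d\lambda_{\alpha}))^{\ast}\cong L^{\infty}(D,d\lambda_{\alpha})$ by using density of the subspace so that restriction/extension of functionals is a norm-preserving bijection. The only difference is that the paper merely asserts the density (and invokes Hahn--Banach where extension by continuity suffices), whereas you supply the truncation argument proving it --- a detail worth having, but not a different approach.
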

\begin{proof}
By Hahn-Banach  theorem we can join to every bounded functional $\varphi\in(L_{\alpha}^{1}(D,d\lambda))^{\ast}$ its extension $\psi$ on $L^{1}(D,d\lambda_{\alpha}),$ i.e. $\psi\in (L^{1}(D,d\lambda_{\alpha}))^{\ast}=L^{\infty}(D,d\lambda_{\alpha}),$ where
$$\|\psi\|\leq\| \varphi\|.$$
On the other hand, the fact that $L_{\alpha}^{1}(D,d\lambda)$ is dense in $L^{1}(D,d\lambda_{\alpha})$ makes clear that
$$\|\psi\|=\| \varphi\|,$$ and that $\psi$ is unique.\\
Similarly, to every $\psi\in (L^{1}(D,d\lambda_{\alpha}))^{\ast}$ we can join its  bounded restriction on $L_{\alpha}^{1}(D,d\lambda_{\alpha})$ with the same properties as above.
So we conclude that appropriate correspondence is an isometric isomorphism.
\end{proof}

\begin{theorem}\label{teo5}
 Assume that $d\lambda_{\alpha}=(1-|z|^2)^{\alpha}dA(z),\enspace-2<\alpha\leq-1$ and let $P$ be the Bergman projection $P:L^{1}(D,d\lambda_{\alpha})\rightarrow B_{1}.$ Then $P$ is an unbounded operator.
\end{theorem}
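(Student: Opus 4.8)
The plan is to argue by duality, using the two identifications already at hand: $B_{1}^{\ast}\cong{\cal B}$ from Proposition~2, realized through the invariant pairing $\langle f,g\rangle=\int_{D}f'(z)\overline{g'(z)}\,dA(z)$, and $(L_{\alpha}^{1}(D,d\lambda))^{\ast}\cong L^{\infty}(D,d\lambda_{\alpha})$ from the preceding Lemma. Suppose, for contradiction, that $P$ is bounded from $L_{\alpha}^{1}(D,d\lambda)$ into $B_{1}$. Then for each fixed $g\in{\cal B}$ the linear functional $f\mapsto\langle Pf,g\rangle$ is bounded on $L_{\alpha}^{1}(D,d\lambda)$, with norm at most $C\|P\|\,\|g\|_{{\cal B}}$, and hence by the Lemma it is represented by some function that is essentially bounded against $d\lambda_{\alpha}$. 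I would reach a contradiction by exhibiting a single Bloch function whose representing function fails to be essentially bounded.

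The computational core is to turn $\langle Pf,g\rangle$ into an honest integral against $f$. Writing $Pf(z)=\int_{D}f(w)(1-z\bar w)^{-2}\,dA(w)$ and differentiating under the integral sign gives $(Pf)'(z)=\int_{D}2\bar w\,f(w)(1-z\bar w)^{-3}\,dA(w)$, so Fubini yields $\langle Pf,g\rangle=\int_{D}f(w)\,\overline{G(w)}\,dA(w)$ with $G(w)=2w\int_{D}g'(z)(1-\bar z w)^{-3}\,dA(z)$. To keep this interchange legitimate I would first restrict to $f$ supported on a compact subset of $D$, so that $(Pf)'$ is bounded on $D$ and the double integral converges absolutely; these $f$ already lie in the domain of $P$, so they suffice to test boundedness.

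Now take the canonical Bloch function $g(z)=\log\frac{1}{1-z}$, so $g'(z)=\frac{1}{1-z}$. Expanding $\frac{1}{1-z}$ and $(1-\bar z w)^{-3}$ in power series and using $\int_{D}z^{k}\bar z^{m}\,dA=\delta_{km}/(k+1)$ collapses the integral to a single geometric-type series and gives the closed form $G(w)=\frac{w(2-w)}{(1-w)^{2}}$; in particular $G(r)>0$ for $r\in(0,1)$ and $|G(w)|\asymp|1-w|^{-2}$ as $w\to1$. The boundedness of $f\mapsto\int_{D}f\,\overline{G}\,dA$ on $L_{\alpha}^{1}(D,d\lambda)$ is exactly the statement that $(1-|w|^{2})^{-\alpha}G(w)\in L^{\infty}(D)$, since the supremum of $|\int_{D}f\,\overline{G}\,dA|$ over the unit ball of $L^{1}(D,d\lambda_{\alpha})$ equals $\|(1-|\cdot|^{2})^{-\alpha}G\|_{\infty}$. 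Along the real radius $w=r\to1^{-}$ one has $(1-r^{2})^{-\alpha}|G(r)|\asymp(1-r)^{-2-\alpha}$, and since $\alpha>-2$ the exponent $-2-\alpha$ is negative, so this tends to $\infty$. Thus the representing function is not in $L^{\infty}(D,d\lambda_{\alpha})$, contradicting the assumed boundedness, and $P$ is unbounded.

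Equivalently, and this is the form I would actually write out to bypass any concern about the limiting nature of the invariant pairing, I would display the blow-up directly: choose $r_{n}\to1^{-}$, let $E_{n}$ be a small disc about $r_{n}$ lying compactly in $D$, and set $f_{n}=\chi_{E_{n}}/d\lambda_{\alpha}(E_{n})$. Then $\|f_{n}\|_{L^{1}(D,d\lambda_{\alpha})}=1$, while (using $G>0$ near $r_{n}$) $\langle Pf_{n},g\rangle\approx(1-r_{n}^{2})^{-\alpha}G(r_{n})\asymp(1-r_{n})^{-2-\alpha}\to\infty$, so the duality bound $\|Pf_{n}\|_{B_{1}}\ge|\langle Pf_{n},g\rangle|/(C\|g\|_{{\cal B}})$ forces $\|Pf_{n}\|_{B_{1}}\to\infty$. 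The only genuinely delicate point is the Fubini step together with the claim that the pairing against this fixed $g$ computes the $B_{1}$-functional on the nice functions $Pf_{n}$; the asymptotic $|G(w)|\asymp|1-w|^{-2}$ and the weight bookkeeping $-2-\alpha<0\Leftrightarrow\alpha>-2$ are then routine.
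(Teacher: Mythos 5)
Your proposal is correct, and although it shares the paper's overall duality skeleton --- identifying $(L_{\alpha}^{1}(D,d\lambda))^{\ast}\cong L^{\infty}(D,d\lambda_{\alpha})$, passing to the adjoint, and using Fubini on $f\in C_{c}(D)$ to write $\left<Pf,g\right>=\int_{D}f\,\overline{G}\,dA$ --- your witness of unboundedness is genuinely different. The paper never fixes a single $g$: it constructs a sequence of polynomials $g_{z}^{n}(w)=\frac{1}{C_{n}}\sum_{k=0}^{n}\bar{z}^{k}w^{k+1}$ with $\|g_{z}^{n}\|_{\cal B}\leq 1$, evaluates $P^{\ast}g_{z_{n}}^{n}$ at the moving points $z_{n}=1-\frac{1}{n}$, and extracts the growth $|P^{\ast}g_{z_{n}}^{n}(z_{n})|\asymp n^{2+\alpha}$ from the asymptotics of the partial sums $\sum_{k=0}^{n}(k+1)(k+2)|z|^{2k}$, which requires the normalization $C_{n}\asymp n$ and the closed-form expression \eqref{7}. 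You instead take one fixed Bloch function $g(z)=\log\frac{1}{1-z}$ and compute its representing function exactly: your series computation giving $G(w)=\frac{w(2-w)}{(1-w)^{2}}$ is correct, and $(1-r^{2})^{-\alpha}|G(r)|\asymp(1-r)^{-(2+\alpha)}\to\infty$ reproduces precisely the paper's growth rate with $1-r=\frac{1}{n}$. Your version is cleaner and proves something formally stronger: the formal adjoint fails to send even a single Bloch function into $L^{\infty}(D,d\lambda_{\alpha})$, rather than merely being unbounded along a normalized sequence of polynomials. The one place where you go slightly beyond what the paper literally establishes is that the paper derives the adjoint formula only for $g$ with bounded derivative, whereas your $g'(z)=\frac{1}{1-z}$ is unbounded; but your Fubini step needs only $g'\in L^{1}(D,dA)$, which holds since $\int_{D}|1-z|^{-1}dA(z)<\infty$, and for such $g$ the invariant pairing with $Pf$, $f\in C_{c}(D)$, is an absolutely convergent integral (as $(Pf)'$ is bounded), so its identification with the $B_{1}$--${\cal B}$ duality functional goes through by dominated convergence; this is exactly the ``delicate point'' you flag, and your fallback argument with the normalized indicators $f_{n}$ is a valid primal restatement of the same estimate.
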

\begin{proof}

   According to the Lemma 4, we are going to identify the dual space $(L_{\alpha}^{1}(D,d\lambda_{\alpha}))^{\ast}$ with $L^{\infty}(D,d\lambda_{\alpha}).$ Now, we are going to consider adjoint operator $P^{\ast}:{\cal B}\rightarrow L^{\infty}(D,d\lambda_{\alpha}).$
Let us first determine the form of $P^{\ast}.$ \\
\indent At the beginning we can consider function $g$ as a polynomial or some other function in Bloch space with bounded first derivative. Using classical definition of conjugate operator we have
\begin{equation}
\label{basic1}
P^{\ast}g(f)=g(Pf),\enspace\mbox{where}\enspace g\in {\cal B},f\in L^{1}(D,d\lambda).
\end{equation}
Identity in (1) is equivalent with

\begin{equation}
\label{basic2}
\begin{split}
\int_{D}f(z)\overline{P^{\ast}g(z)}d\lambda_{\alpha}(z) &=\int_{D}(Pf)'(z)\overline{g'(z)}dA(z)\\&= \int_{D}\int_{D}\frac{2f(w)\bar{w}}{(1-z\bar{w})^3}dA(w)\overline{g'(z)}dA(z).\end{split}
\end{equation}
In order to apply Fubini's theorem on the right hand side of \eqref{basic2}, we assume that $f$ is a continuous  function with a compact support in $D,$ i.e. $f\in C_{c}(D)$. Now, we are in position to use  Fubini's theorem, and we get
\begin{equation}\label{23}
\int_{D}f(w)\overline{P^{\ast}g(w)}d\lambda_{\alpha}(w)=2\int_{D}f(w)(1-|w|^2)^{-\alpha}\bar{w}\int_{D}\frac{\overline{g'(z)}}{(1-z\bar{w})^3}dA(z)d\lambda_{\alpha}(w).
\end{equation}
On the left and right hand side of \eqref{23} we have two bounded functionals on $L^{1}(D,d\lambda),$ which are identical on the set $C_{c}(D),$ so we conclude that they are the same.\\
In other words, we have
\begin{equation}
P^{\ast}g(z)=2(1-|z|^2)^{-\alpha}z\int_{D}\frac{g'(w)}{(1-z\bar{w})^3}dA(w),\enspace z\in D.
\end{equation}
So far we have determined $P^{*}$ on the subspace of bounded functions in ${\cal B}$ and in sequel we shall prove that $P^{*}$ is unbounded on this space w.r.t the norm of ${\cal B}$ .\\
\indent Let us observe functions $$g_{z}^{n}(w)=\frac{1}{C_{n}}\sum_{k=0}^{n}\bar{z}^{k}w^{k+1},\enspace w\in D,\enspace\mbox{ for fixed} \enspace z\in D,\enspace n\in {\mathbf N}$$ and $C_{n}=1+\sum_{k=1}^{n}\left(\frac{k}{k+1}\right)^{\frac{k}{2}}.$ It is easy to show  that $g_{z}^{n}\in {\cal B_{0}}\enspace\mbox{and} \| g_{z}^{n}\|_{{\cal B}}\leq 1. $ Also, we should notice that $C_{n}\asymp n,\ \ n\rightarrow +\infty.$
\emph{The notation $A(n) \asymp  B(n) ,\ \ n\rightarrow +\infty$, means that there are constants
$C > 0$ and $c > 0$ such that $cA(n) = B(n) = CA(n)$, for all $n$ large enough.}
\indent On the other hand we have
\begin{equation}
P^{\ast}g_{z}^{n}(z)=2(1-|z|^2)^{-\alpha}z\int_{D}\frac{(g_{z}^{n}(w))'}{(1-z\bar{w})^3}dA(w),
\end{equation}
for fixed $z\in D.$ By using  Taylor expansion of the function  $\frac{1}{(1-z\bar{w})^3},$ the formula $(g_{z}^{n}(w))'=\frac{1}{C_{n}}\sum_{k=0}^{n}(k+1)\bar{z}^{k}w^{k}$ and the orthogonality, for fixed $z\in D$, we get
\begin{equation}
\begin{split}
P^{\ast}g_{z}^{n}(z)&=\frac{2}{C_{n}}(1-|z|^2)^{-\alpha}z\int_{D}\sum_{k=0}^{n}(k+1)\bar{z}^{k}w^{k}\sum_{l=0}^{\infty}\frac{\Gamma{(3+l)}}{l!\Gamma(3)}z^{l}\bar{w}^{l}dA(w)\\
&=\frac{1}{C_{n}}(1-|z|^2)^{-\alpha}z\sum_{k=0}^{n}(k+1)\frac{\Gamma(k+3)}{k!}|z|^{2k}\int_{D}|w|^{2k}dA(w)\\
 &=\frac{1}{C_{n}}(1-|z|^2)^{-\alpha}z\sum_{k=0}^{n}(k+1)(k+2)|z|^{2k}.
\end{split}
\end{equation}

 Now, we see that $\lim_{n\rightarrow +\infty}\sum_{k=0}^{n}(k+1)(k+2)|z|^{2k}=\frac{2}{(1-|z|^2)^{3}},z\in D,$ i.e. more precisely for $z\in D$
  \begin{equation}\label{7}
  \begin{split}
  \sum_{k=0}^{n}&(k+1)(k+2)|z|^{2k}\\&=\frac{-2+|z|^{2+2 n} \left(6+5 n+n^2-2 (1+n) (3+n) |z|^2+(1+n) (2+n) |z|^4\right)}{\left(|z|^2-1\right)^3}.
  \end{split}
  \end{equation}
  Choosing the sequence $z_{n}=1-\frac{1}{n}$, according to the relation \eqref{7}, we estimate
  $$|P^{\ast}g_{z_{n}}^{n}(z_{n})|\asymp n^{2+\alpha},\enspace n\rightarrow +\infty.$$
\end{proof}

Our next result is related to finding a positive constant $C,$ such that
$$\| P^{\ast}f\|\leq C\| f\|,\enspace f\in{\cal B}.$$

 In order to estimate the adjoint operator of the Bergman projection, when $P:L^{1}(D,d\lambda)\rightarrow B_{1},$ we need the following lemma.
 \begin{lemma}
 Let ${\cal B}$ be the Bloch space. If $f\in {\cal B}$   then
  $$\sup_{z\in D}{(1-|z|^2)^{2}|(z^{2}f'(z))'|}\leq 4\| f\|_{{\cal B}}.$$

 \end{lemma}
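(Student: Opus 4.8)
The plan is to reduce the pointwise estimate to a two–coefficient bound for the function obtained by composing $f$ with a disc automorphism, exploiting the Möbius invariance of the Bloch seminorm. Write $\varphi_z(w)=\frac{z-w}{1-\bar z w}$ for the involutive automorphism with $\varphi_z(0)=z$, and set $F=f\circ\varphi_z$. Since $(1-|w|^2)|\varphi_z'(w)|=1-|\varphi_z(w)|^2$, the seminorm is preserved, $\|F\|_{{\cal B}}=\|f\|_{{\cal B}}$. Denoting the Taylor coefficients of $F$ at the origin by $F(w)=d_0+d_1w+d_2w^2+\cdots$, the chain rule together with $\varphi_z'(0)=|z|^2-1$ and $\varphi_z''(0)=-2\bar z(1-|z|^2)$ gives $d_1=F'(0)=-(1-|z|^2)f'(z)$ and $2d_2=F''(0)=(1-|z|^2)^2f''(z)-2\bar z(1-|z|^2)f'(z)$.

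First I would establish the exact identity
\[
(1-|z|^2)^2\,(z^2f'(z))' = -2z\,d_1+2z^2\,d_2 ,
\]
obtained by multiplying $(z^2f'(z))'=2zf'(z)+z^2f''(z)$ by $(1-|z|^2)^2$ and substituting the two expressions above; the terms proportional to $f'(z)$ collapse because $(1-|z|^2)+|z|^2=1$. Taking moduli and using $|z|\le 1$ then yields
\[
(1-|z|^2)^2\,|(z^2f'(z))'| = 2|z|\,|d_1-z\,d_2|\le 2\bigl(|d_1|+|d_2|\bigr).
\]

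The remaining task is to bound $|d_1|+|d_2|$ by $2\|f\|_{{\cal B}}$. Writing $h=F'=\sum_k b_kw^k$, the hypothesis $f\in{\cal B}$ gives $|h(w)|\le \|f\|_{{\cal B}}/(1-|w|^2)$, while $d_1=b_0$ and $d_2=\tfrac12 b_1$. Integrating $|h|^2$ over the circle $|w|=r$, Parseval's identity and the pointwise bound yield $|b_0|^2+r^2|b_1|^2\le\sum_k|b_k|^2r^{2k}\le \|f\|_{{\cal B}}^2/(1-r^2)^2$. Applying Cauchy–Schwarz to $(|b_0|,r|b_1|)$ against $(1,\tfrac{1}{2r})$ and choosing $r=\tfrac12$,
\[
|d_1|+|d_2|=|b_0|+\tfrac12|b_1|\le \sqrt{1+\tfrac{1}{4r^2}}\,\sqrt{|b_0|^2+r^2|b_1|^2}\le \frac{4\sqrt2}{3}\,\|f\|_{{\cal B}} .
\]
Combining this with the previous display gives $(1-|z|^2)^2|(z^2f'(z))'|\le \tfrac{8\sqrt2}{3}\|f\|_{{\cal B}}<4\|f\|_{{\cal B}}$, and taking the supremum over $z\in D$ would complete the argument.

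I expect the main obstacle to be recognizing that a direct application of the triangle inequality to $2zf'(z)+z^2f''(z)$ is hopeless: the sharp pointwise bound for $(1-|z|^2)^2|f''(z)|$ alone already exceeds $2\|f\|_{{\cal B}}$ (as the test function $f(z)=z^2/2$, with $\|f\|_{{\cal B}}=\tfrac{2\sqrt3}{9}$, shows), so the two summands cannot be estimated separately. The Möbius substitution is precisely what produces the cancellation encoded in $(1-|z|^2)+|z|^2=1$, reducing everything to the single quantity $|d_1|+|d_2|$. A secondary subtlety is that $|d_2|$ is itself not controlled by $\|f\|_{{\cal B}}$, which is why the coefficient estimate must be performed jointly through Cauchy–Schwarz on one circle rather than coefficient by coefficient.
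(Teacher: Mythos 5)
Your proof is correct, and it takes a genuinely different route from the paper's. The paper argues directly from the Cauchy integral formula: it represents $(z^2f'(z))'$ as $\frac{1}{2\pi i}\int_{|\xi|=r}\frac{\xi^2f'(\xi)}{(\xi-z)^2}\,d\xi$, bounds $|f'(\xi)|\le\|f\|_{\mathcal B}/(1-r^2)$ on the circle, evaluates $\int_0^{2\pi}|re^{it}-z|^{-2}\,dt$ as a geometric series, and then chooses $r^2=(1+|z|^2)/2$ to maximize $(1-r^2)(r^2-|z|^2)$, which produces the constant $4$. You instead move the point $z$ to the origin by the Möbius substitution $F=f\circ\varphi_z$, reduce the pointwise quantity to the exact identity $(1-|z|^2)^2(z^2f'(z))'=-2z(d_1-zd_2)$ (I checked the chain-rule computation and the cancellation; it is genuine), and then bound $|d_1|+|d_2|=|b_0|+\tfrac12|b_1|$ jointly via Parseval on the circle $|w|=\tfrac12$ and Cauchy--Schwarz. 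Both proofs rest on the sup bound for a derivative on a single circle plus a choice of radius, but your route buys a strictly better constant: $\tfrac{8\sqrt2}{3}\approx 3.77<4$, and since Theorem~\ref{main} obtains $\|P^{\ast}\|\le 4$ precisely by invoking this lemma, your argument would sharpen the paper's upper bound to $\|P^{\ast}\|\le\tfrac{8\sqrt2}{3}$ (the lower bound $\|P^{\ast}\|\ge 2$ is unaffected); this is of interest because the paper's closing remark leaves the sharp constant open, and you could even do slightly better by optimizing the radius rather than fixing $r=\tfrac12$. What the paper's route buys is brevity: a three-line contour estimate with no change of variable or coefficient analysis. Your side remarks are also sound --- in particular the observation that termwise estimation fails, since $|d_2|\le\|f\|_{\mathcal B}$ is false (witnessed by $f(z)=z^2/2$) --- though they are not needed for the proof itself.
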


 \begin{proof}

By using Cauchy integral formula for an  analytic function $f,$ for fixed $z\in D$ and $|z|<r<1$ we get
\begin{equation}
\begin{split}
(1-|z|^2)^{2}&|(z^{2}f'(z))'|=(1-|z|^2)^{2}\left|\frac{1}{2\pi i}\int_{|\xi|=r}\frac{\xi^{2} f'(\xi)}{(\xi-z)^{2}}d\xi\right|\\
&=(1-|z|^2)^{2}\left|\frac{1}{2\pi i}\int_{0}^{2\pi}\frac{r^{3}e^{3it}if'(re^{it})}{(re^{it}-z)^{2}}dt\right|\\
&\leq \frac{r^{3}\| f\|_{{\cal B}}}{2\pi(1-r^2)}(1-|z|^2)^{2}\int_{0}^{2\pi}\frac{dt}{|re^{it}-z|^{2}}dt\\
&=\frac{r^{3}\| f\|_{{\cal B}}}{2\pi(1-r^2)}(1-|z|^2)^{2}\frac{1}{r^2}\int_{0}^{2\pi}\frac{1}{1-\frac{z}{r}e^{-it}}\frac{1}{1-\frac{\bar{z}}{r}e^{it}}dt\\
&=\frac{r\| f\|_{{\cal B}}}{(1-r^2)}(1-|z|^2)^{2}\sum_{n=0}^{\infty}|z|^{2n}r^{-2n}\\
&=r^{3}\| f\|_{{\cal B}}\frac{(1-|z|^2)^{2}}{(1-r^2)(r^{2}-|z|^2)}\\
&\leq\| f\|_{{\cal B}}\frac{(1-|z|^2)^{2}}{(1-r^2)(r^{2}-|z|^2)}.
\end{split}
\end{equation}

On the other hand,  for a fixed $z$ we can choose $r$  such that function $\varphi(r)=(1-r^2)(r^{2}-|z|^2)$ is maximal, i.e. for $r=\sqrt{\frac{1+|z|^2}{2}}$ we have maximal value of function $\varphi_{max}=\frac{(1-|z|^{2})^2}{4}.$  Finally,
$$\sup_{z\in D}{(1-|z|^2)^{2}|(z^{2}f'(z))')|}\leq 4\| f\|_{{\cal B}}.$$

 \end{proof}
The following theorem is  the main result of this paper.
\begin{theorem}\label{main}
Let $P$ be the Bergman projection $P:L^{1}(D,d\lambda)\rightarrow B_{1}.$ Then $P^{\ast}$ is bounded operator and
$$2\le \| P^{\ast}\|\ \leq 4.$$

\end{theorem}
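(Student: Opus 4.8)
The statement has two halves, and I would handle them separately. For the upper bound $\|P^\ast\|\le 4$, the plan is to exploit the same duality framework already set up in the proof of Theorem~\ref{teo5}, but now with $d\lambda$ in place of $d\lambda_\alpha$ (equivalently $\alpha=-2$ formally, but here $P:L^1(D,d\lambda)\to B_1$ so the relevant adjoint acts $P^\ast:{\cal B}\to (L^1(D,d\lambda))^\ast=L^\infty(D,d\lambda)$). First I would re-derive the pointwise formula for $P^\ast g$ exactly as in \eqref{basic1}--(4), taking $g\in{\cal B}$ with bounded derivative and $f\in C_c(D)$ so that Fubini applies; this should yield
\begin{equation*}
P^\ast g(z)=2z(1-|z|^2)^{2}\int_{D}\frac{g'(w)}{(1-z\bar w)^3}\,dA(w),\quad z\in D,
\end{equation*}
since the weight factor $(1-|z|^2)^{-\alpha}$ becomes $(1-|z|^2)^{2}$ for the measure $d\lambda$. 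The norm $\|P^\ast g\|$ is then the $L^\infty(D,d\lambda)$-norm, i.e. $\sup_{z\in D}|P^\ast g(z)|$.

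\textbf{Reducing the integral.} The key computational step is to evaluate or bound the inner integral $\int_D \frac{g'(w)}{(1-z\bar w)^3}\,dA(w)$ and recognize it in closed form. The natural move is to integrate by parts or, more cleanly, to use the reproducing property of the Bergman kernel. Expanding $(1-z\bar w)^{-3}=\sum_{l\ge0}\frac{\Gamma(l+3)}{l!\,\Gamma(3)}z^l\bar w^l$ and writing $g'(w)=\sum_k c_k w^k$, orthogonality of $\{w^k\}$ in $L^2(D,dA)$ collapses the double sum to $\sum_k c_k \frac{(k+1)(k+2)}{2}z^k$. A short manipulation should identify this with a derivative expression in $g$ evaluated at $z$; the intended identity is that
\begin{equation*}
P^\ast g(z)=(1-|z|^2)^{2}\,\bigl(z^2 g'(z)\bigr)',
\end{equation*}
which is precisely why Lemma~6 was stated. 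Verifying this identity term-by-term against the series is the heart of the matter and the step I expect to be the main obstacle: one must pin down the constant and confirm the factor $z^2$ inside the derivative. Once this is in hand, Lemma~6 gives immediately
\begin{equation*}
\|P^\ast g\|_{L^\infty(D,d\lambda)}=\sup_{z\in D}(1-|z|^2)^{2}\bigl|(z^2 g'(z))'\bigr|\le 4\|g\|_{{\cal B}},
\end{equation*}
so $\|P^\ast\|\le 4$ on the dense subspace of ${\cal B}$ consisting of functions with bounded derivative, and by density on all of ${\cal B}$.

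\textbf{Lower bound.} For $\|P^\ast\|\ge 2$, the plan is to test $P^\ast$ against an explicit family and extract the asymptotic constant, reusing the machinery of Theorem~\ref{teo5}. I would take the same functions $g_z^n(w)=\frac1{C_n}\sum_{k=0}^n \bar z^k w^{k+1}$, which satisfy $\|g_z^n\|_{{\cal B}}\le 1$, and evaluate $P^\ast g_z^n(z)$ with the correct weight $(1-|z|^2)^2$. This gives
\begin{equation*}
P^\ast g_z^n(z)=\frac{1}{C_n}(1-|z|^2)^{2}z\sum_{k=0}^{n}(k+1)(k+2)|z|^{2k},
\end{equation*}
and letting $n\to\infty$ with $z\to1$ the inner sum tends to $2/(1-|z|^2)^3$ by the identity \eqref{7}. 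The surviving behaviour is $\frac{1}{C_n}(1-|z|^2)^{2}\cdot\frac{2}{(1-|z|^2)^3}=\frac{2}{C_n(1-|z|^2)}$; choosing $z_n=1-\frac1n$ so that $(1-|z_n|^2)^{-1}\asymp n/2$ and using $C_n\asymp n$, the factor of $n$ cancels and one is left with a positive constant. The delicate point is to compute this limiting constant precisely and show it equals (or is at least) $2$, balancing the rate of $C_n$ against the blow-up of the sum; since $\|g_{z_n}^n\|_{{\cal B}}\le 1$, this produces a sequence of unit-norm functionals with $\|P^\ast g_{z_n}^n\|\ge \liminf_n |P^\ast g_{z_n}^n(z_n)|$, yielding $\|P^\ast\|\ge 2$. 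The main obstacle on this side is controlling the two competing asymptotics ($C_n\asymp n$ versus the $n$-th partial sum near $|z|=1$) tightly enough to land exactly on the constant $2$ rather than merely a positive number.
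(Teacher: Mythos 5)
Your upper bound follows essentially the paper's route: the identity $P^{\ast}g(z)=(1-|z|^2)^{2}\bigl(z^{2}g'(z)\bigr)'$ combined with Lemma 6 is exactly how the paper obtains the constant $4$ (the paper derives the identity via Cauchy's formula in polar coordinates rather than by series; note also that your series computation should give $\sum_k c_k\frac{k+2}{2}z^k$, not $\sum_k c_k\frac{(k+1)(k+2)}{2}z^k$, since $\int_D|w|^{2k}\,dA(w)=\frac{1}{k+1}$ -- your final identity is nevertheless the correct one). The genuine gap is the closing step ``by density on all of ${\cal B}$.'' A Bloch function with bounded derivative satisfies $(1-|z|^2)|g'(z)|\to0$ as $|z|\to1$, so all such functions lie in the little Bloch space ${\cal B}_0$, and their norm closure is contained in ${\cal B}_0$, a \emph{proper} closed subspace of ${\cal B}$ (e.g. $\log(1-z)\in{\cal B}\setminus{\cal B}_0$); no norm-density argument of this kind can reach all of ${\cal B}$. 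Worse, for general $g\in{\cal B}$ the integral $\int_D g'(w)(1-z\bar w)^{-3}dA(w)$ and even the pairing $\int_D(Pf)'\overline{g'}\,dA$ need not converge absolutely, which is precisely why the paper truncates the $z$-integral to $rD$ in \eqref{again} and then invests the subharmonicity/maximum-principle argument to prove \eqref{equ}, i.e. that letting $r\to1^-$ recovers the full supremum. That regularization step (or a substitute, e.g. weak-$\ast$ approximation of $g$ by its dilations $g_r$, using that $P^{\ast}$ is a priori bounded and weak-$\ast$ continuous) is missing from your proposal and cannot be skipped.

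Your lower bound takes a genuinely different route from the paper, and it fails quantitatively -- the ``delicate point'' you flag is not merely delicate, it goes the wrong way. With the paper's normalization $C_n=1+\sum_{k=1}^{n}(k/(k+1))^{k/2}\sim e^{-1/2}n$, take $1-|z_n|=a/n$, so $1-|z_n|^2\approx 2a/n$ and $|z_n|^{2(n+1)}\approx e^{-2a}$; then the closed form \eqref{7} gives
$$\sum_{k=0}^{n}(k+1)(k+2)|z_n|^{2k}\approx\frac{2\left(1-e^{-2a}(1+2a+2a^2)\right)}{(1-|z_n|^2)^3},$$
and hence
$$|P^{\ast}g_{z_n}^{n}(z_n)|\approx e^{1/2}\,\frac{1-e^{-2a}(1+2a+2a^2)}{a}.$$
The right-hand side is maximized near $a\approx1.7$ at a value of about $0.64$, and for your choice $z_n=1-1/n$ (i.e. $a=1$) it is about $0.53$; it never approaches $2$. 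The losses are structural: the normalization $C_n\asymp n$ needed to keep $\|g_{z}^{n}\|_{\cal B}\le1$ and the truncation of the series at $k=n$ each cost a fixed constant factor, so this family can only certify $\|P^{\ast}\|\ge c$ for some $c<1$. The paper's lower bound is instead obtained from the single explicit function $g(z)=\frac12\log[(1+z)/(1-z)]$, which has $\|g\|_{\cal B}=1$: one computes exactly $\int_{rD}\overline{g'(z)}(1-z\bar w)^{-3}dA(z)=r^2/(1-r^4\bar w^2)^2$, lets $r\to1^-$, and reads off $\|P^{\ast}g\|=2\sup_{|w|<1}\bigl|\bar w(1-|w|^2)^2/(1-\bar w^2)^2\bigr|=2$. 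Some exact evaluation of this kind is indispensable; the asymptotic family cannot deliver the constant $2$.
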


\begin{proof}
As we have already seen in proof of Theorem~\ref{teo5}, the adjoint operator $P^{\ast}:{\cal B}\rightarrow L^{\infty}$ acts in following manner

\begin{equation}
\label{basic11}
\int_{D}f(z)\overline{P^{\ast}g(z)}d\lambda(z) =\int_{D}(Pf)'(z)\overline{g'(z)}dA(z)= 2\int_{D}\int_{D}\frac{f(w)\bar{w}}{(1-z\bar{w})^3}dA(w)\overline{g'(z)}dA(z),
\end{equation}
where $f\in L^{1}(D,d\lambda)$ and $g$ is Bloch function.\\
By using elemental relation for norm of functional  $x^{\ast}$ on Banach space $X,$ \\
 $\| x^{\ast}\|=\sup{\{|\left<x,x^{\ast}\right>|: x\in X,\enspace \| x\|\leq 1\}},$ we conclude that

\begin{equation}
\label{basic22}
\| P^{\ast}g\| =2\sup_{\| f\|\leq 1} {\left|\int_{D}\int_{D}\frac{f(w)\bar{w}}{(1-z\bar{w})^3}dA(w)\overline{g'(z)}dA(z)\right|}.
\end{equation}

It is clear that in the last relation we can take $f$ to be a continuous function with compact support, i.e. $f\in C_{c}(D).$ So, for fixed $f\in C_{c}(D),\enspace\| f\|_{L^{1}(D,d\lambda)}=1,$ we have

\begin{equation}\label{again}
\begin{split}
&\left|\int_{D}\int_{D}\frac{f(w)\bar{w}}{(1-z\bar{w})^3}dA(w)\overline{g'(z)}dA(z)\right|\\
&=\lim_{r\rightarrow 1^{-}}{\left|\int_{rD}\int_{D}\frac{f(w)\bar{w}}{(1-z\bar{w})^3}dA(w)\overline{g'(z)}dA(z)\right|}\\
&=\lim_{r\rightarrow 1^{-}}{\left|\int_{D}f(w)\bar{w}(1-|w|^2)^{2}\int_{rD}\frac{\overline{g'(z)}}{(1-z\bar{w})^3}dA(z)d\lambda(w)\right|}.
\end{split}
\end{equation}
The duality argument $(L^{1}(D,d\lambda))^{\ast}=L^{\infty}(D,d\lambda)$ implies
\begin{equation}
\begin{split}
&\lim_{r\rightarrow 1^{-}}{\left|\int_{D}f(w)\bar{w}(1-|w|^2)^{2}\int_{rD}\frac{\overline{g'(z)}}{(1-z\bar{w})^3}dA(z)dAd\lambda(w)\right|}\\
&\leq\lim_{r\rightarrow 1^{-}}{\sup_{z\in D}(1-|z|^2)^{2}{\left|z\int_{rD}\frac{g'(w)}{(1-z\bar{w})^3}dA(w)\right|}}\\
&=\lim_{r\rightarrow 1^{-}}{\sup_{z\in D}(1-|z|^2)^{2}{\left|z\int_{D}\frac{g'(rw)}{(1-rz\bar{w})^3}rdA(w)\right|}}\\
&\leq\lim_{r\rightarrow 1^{-}}\sup_{z\in D}(1-|z|^2)^{2}|z||\varphi(rz)|,
\end{split}
\end{equation}
where $\varphi(z)=\int_{D}\frac{g'(rw)}{(1-z\bar{w})^3}dA(w)$ is an analytic function of $z$. Moreover \begin{equation}\label{equ}\lim_{r\rightarrow 1^{-}}\sup_{z\in D}(1-|z|^2)^{2}|z||\varphi(rz)|=\sup_{z\in D}(1-|z|^2)^{2}|z||\varphi(z)|.\end{equation}
Let us prove that \eqref{equ}. For $\omega=re^{it},t\in[0,2\pi),\enspace r\in [0,1),$ by using the fact that $|\varphi(\omega z)|$ is subharmonic for fixed $z,$ we conclude that the function $ \sup_{z\in D}(1-|z|^2)^{2}|\omega z\varphi(\omega z)|$ is also subharmonic in $\omega$. By the maximum principle for the subharmonic functions we obtain
\begin{equation}
\begin{split}
\lim_{r\rightarrow 1^{-}}\sup_{z\in D}(1-|z|^2)^{2}| rz||\varphi(rz)|
&\leq \sup_{\omega\in D}\sup_{z\in D}(1-|z|^2)^{2}|\omega z||\varphi(\omega z)|\\
&\leq\sup_{t\in[0,2\pi)}\sup_{z\in D}(1-|z|^2)^{2}|e^{it} z||\varphi(e^{it}z)|\\
&=\sup_{z\in D}(1-|z|^2)^{2}|z||\varphi(z)|.
\end{split}
\end{equation}
Further if, $z_n$ is a sequence in $D$ such that $$\sup_{z\in D}(1-|z|^2)^{2}|z||\varphi(z)|=\lim_{n\to \infty}(1-|z_n|^2)^{2}|z_n||\varphi(z_n)|,$$ then for $r_n=1-(1-|z_n|)/n$, we have $$\lim_{r\rightarrow 1^{-}}\sup_{z\in D}(1-|z|^2)^{2}|z||\varphi(rz)|\ge (1-|z_n/r_n|^2)^{2}|z_n||\varphi(z_n)|=(1-|y_n|^2)^{2}|r_ny_n||\varphi(r_ny_n)|.$$ Here $|y_n|=|z_n|/(1-(1-|z_n|)/n)<1$ for $n>1$. Since $$\lim_{n\to\infty}\frac{1-|z_n|^2}{1-|z_n/r_n|^2}=1,$$ it follows that $$\lim_{r\rightarrow 1^{-}}\sup_{z\in D}(1-|z|^2)^{2}| rz||\varphi(rz)|\ge \sup_{z\in D}(1-|z|^2)^{2}|z||\varphi(z)|.$$
Finally, we get
\begin{equation}
\| P^{\ast}g\|\leq 2\sup_{z\in D }(1-|z|^2)^{2}{\left|z\int_{D}\frac{g'(w)}{(1-z\bar{w})^3}dA(w)\right|},
\end{equation}
for $g\in {\cal B}.$\\
Our goal is to determine a constant $C$ such that
\begin{equation}
2\sup_{z\in D }(1-|z|^2)^{2}{\left|z\int_{D}\frac{g'(w)}{(1-z\bar{w})^3}dA(w)\right|}\leq C\| g\|_{{\cal B}}.
\end{equation}
If we write $w$ in polar coordinate $w=re^{it}$ we get
\begin{equation}\label{dhor}
\begin{split}
z\int_{D}\frac{g'(w)}{(1-z\bar{w})^3}dA(w)&=z\frac{1}{\pi}\int_{0}^{1}rdr\int_{0}^{2\pi}\frac{g'(re^{it})}{(1-zre^{-it})^3}dt\\
&=z\frac{1}{\pi}\int_{0}^{1}rdr\int_{|\xi|=r}\frac{g'(\xi)\xi^2}{(\xi-zr^2)^3}d\xi.
\end{split}
\end{equation}
Now, Cauchy formula applied to the last integral in \eqref{dhor} leads to
\begin{equation}
\begin{split}
z\frac{1}{\pi}\int_{0}^{1}rdr\int_{|\xi|=r}\frac{g'(\xi)\xi^2}{(\xi-zr^2)^3}d\xi &=z\int_{0}^{1}(g'(\xi)\xi^2)_{|\xi=zr^2}^{''}rdr\\&=
\frac{1}{2}\int_{0}^{z}(g'(\xi)\xi^2)^{''}d\xi\\
&=\frac{1}{2} (z^2g'(z))^{'}.
\end{split}
\end{equation}
By Lemma 5, we get
\begin{equation}
\begin{split}
 2\sup_{z\in D }&(1-|z|^2)^{2}{\left|z\int_{D}\frac{g'(w)}{(1-z\bar{w})^3}dA(w)\right|}\\
 &\leq\sup_{z\in D}{(1-|z|^2)^{2}|(z^{2}g'(z))'|}\\
 &\leq 4\| g\|_{{\cal B}}.
\end{split}
\end{equation}
Finally, we are going to give a lower bond for the $\| P^{\ast}\|$.

Let $g=\frac{1}{2}\log[(1+z)/(1-z)]$. Then $g'=1/(1-z^2)$. We make use of \eqref{again}. We have $$\left<P^* g,f\right>=2\lim_{r\to 1^-}{\left|\int_{D}f(w)\bar{w}(1-|w|^2)^{2}\int_{rD}\frac{\overline{g'(z)}}{(1-z\bar{w})^3}dA(z)d\lambda(w)\right|}.$$
Let us find $$I_r=\int_{rD}\frac{\overline{g'(z)}}{(1-z\bar{w})^3}dA(z).$$ We have

$$I_r=\int_{rD}\frac{1}{1-\bar z^2}\frac{1}{(1-z\bar{w})^3}dA(z).$$

Further $$\frac{1}{1-\bar z^2}\frac{1}{(1-z\bar{w})^3}=\frac{1}{2}\sum_{k=0}^\infty \bar z^{2k}\sum_{j=2}^\infty j(j-1) z^{j-2}\bar{w}^{j-2}.$$ Let $z=\rho e^{it}$. Then $$\frac{1}{1-\bar z^2}\frac{1}{(1-z\bar{w})^3}=\frac{1}{2}\sum_{k=0}^\infty (2k+2)(2k+1)\rho^{4k}\bar w^{2k}+\sum_{l\in\mathbb{Z}\setminus\{0\}} e^{il t} A_l(\rho,w),$$ where the functions $A_l(\rho,w)$, $l\in\mathbb{Z}\setminus\{0\}$  do not depend on $t$. Thus \[\begin{split}I_r&=\frac{1}{2\pi}\sum_{k=0}^\infty \int_0^{2\pi}dt\int_0^r(2k+2)(2k+1)\rho^{4k+1}\bar w^{2k}d\rho\\&=\sum_{k=0}^\infty {(k+1)r^{4k+2}\bar w^{2k}} \\&=\frac{r^2}{(1-r^4\bar w^2)^2}.\end{split}\]
 Hence we have \[\begin{split}\left<P^* g,f\right>&=2\lim_{r\to 1^-}{\left|\int_{D}f(w)\bar{w}(1-|w|^2)^{2}\frac{r^2}{(1-r^4\bar w^2)^2}d\lambda(w)\right|}\\&=2\int_{D}f(w)\frac{\bar{w}(1-|w|^2)^{2}}{(1-\bar w^2)^2}d\lambda(w).\end{split}\]
This implies that $$\|P^* g\|=\sup_{\|f\|\le 1}|\left<P^* g,f\right>|=2\sup_{|w|<1}\left|\frac{\bar{w}(1-|w|^2)^{2}}{(1-\bar w^2)^2}\right|=2.$$ Since the Bloch norm of $g$ is equal to 1, we obtain that $\|P^*\|\ge 2$ as desired.
\end{proof}
\begin{remark}
It remains an open problem to find the exact value of $\|P^{\ast}\|$ and, in view of the proof of  Theorem~\ref{main}, we believe that it is equivalent to the following extremal problem. Given the functional $\mathcal{P}(f):=\sup_{|z|<1}(1-|z|^2)^2|(z^2 f(z))'|$  find its supremum under the condition $f\in\mathcal{B}$, $\|f\|=1$.
\end{remark}

\subsection*{Acknowledgement} We are thankful to the referee for comments and suggestions  that have improved this paper.

\end{document}